\newtheorem{thm}{Theorem}[section]
\newtheorem{lem}[thm]{Lemma}
\newtheorem{cor}[thm]{Corollary}
\newtheorem{exam}[thm]{Example}
\newtheorem{defn}[thm]{Definition}
\newtheorem{problem}{Problem}
\renewcommand{\leq}{\leqslant}
\renewcommand{\geq}{\geqslant}
\renewcommand{\ker}{{\rm ker}}
\newcommand{\rank}{\operatorname{rank}}
\newcommand{\genset}[1]{\ensuremath{\langle\: #1 \:\rangle}}
\newcommand{\trans}{\mathcal{T}_{n}}
\newcommand{\sym}{\mathcal{S}_{n}}
\newcommand{\alt}{\mathcal{A}_{n}}
\newcommand{\agl}{\mbox{\rm AGL}}
\newcommand{\psl}{\mbox{\rm PSL}}
\newcommand{\pgl}{\mbox{\rm PGL}}
\newcommand{\asl}{\mbox{\rm ASL}}
\newcommand{\pgaml}{\mbox{\rm P}\Gamma {\rm L}}
\newcommand{\agaml}{\mbox{\rm A}\Gamma {\rm L}}
\newcommand{\dihed}[1]{\ensuremath{D_{#1}}}
\begin{document}



\title[Groups and Transformation Semigroups]{The Classification of Normalizing Groups}
\author{Jo\~{a}o Ara\'{u}jo}
\author{Peter J. Cameron}
\author{James Mitchell}
\author{Max Neunh\"offer}

\address[Ara\'{u}jo]
{Universidade Aberta
and
Centro de \'{A}lgebra \\
Universidade de Lisboa \\
Av. Gama Pinto, 2, 1649-003 Lisboa \\ Portugal}
\email{\url{jaraujo@ptmat.fc.ul.pt}}

\address[Cameron]{Department of Mathematics \\
School of Mathematical Sciences at Queen Mary\\ University of London}
\email{\url{P.J.Cameron@qmul.ac.uk }}

\address[Mitchell]{Mathematical Institute,
University of St Andrews, North Haugh, St Andrews, Fife, KY16 9SS, Scotland\\}
\email{\url{jamesm@mcs.st-and.ac.uk}}

\address[Neunh\"offer]{Mathematical Institute,
University of St Andrews, North Haugh, St Andrews, Fife, KY16 9SS, Scotland}
\email{\url{neunhoef@mcs.st-and.ac.uk}}


\begin{abstract}
Let $X$ be a finite set such that $|X|=n$. Let $\trans$ and $\sym$ denote the transformation monoid and the symmetric group on $n$ points, respectively. Given $a\in \trans\setminus \sym$, we say that a group $G\leq \sym$ is \emph{$a$-normalizing} if $$\langle a,G\rangle \setminus G=\langle g^{{-1}}ag\mid g\in G\rangle,$$
where $\langle a, G\rangle$ and $\langle g^{{-1}}ag\mid g\in G\rangle$ denote the subsemigroups of $\trans$ generated by the sets $\{a\}\cup G$ and $\{g^{-1}ag  \mid g\in G\}$, respectively. 
 If $G$ is $a$-normalizing for all $a\in \trans\setminus \sym$, then we say that $G$ is \emph{normalizing}.

The goal of this paper is to classify the normalizing groups and hence answer a question of Levi, McAlister, and McFadden. The paper ends with a number of problems for experts in groups, semigroups and matrix theory.

\end{abstract}

\maketitle
\medskip

\noindent{\em Date:} 10 August 2011\\
{\em Key words and phrases:} Transformation semigroups, permutation groups, primitive
groups, GAP\\
{\it 2010 Mathematics Subject Classification:}  20B30, 20B35,
20B15, 20B40, 20M20, 20M17. \\
{\em Corresponding author: Jo\~{a}o Ara\'{u}jo}





\newpage

\section{Introduction and Preliminaries}

For notation and basic results on group theory we refer the reader to \cite{cam,dixon}; for semigroup theory we refer the reader to \cite{Ho95}.
 Let $\trans$ and $\sym$ denote the  monoid consisting of mappings from $[n]:=\{1,\ldots ,n\}$ to $[n]$ and the symmetric group on $[n]$ points, respectively.  The monoid $\trans$ is usually called the full transformation semigroup. In \cite{lm}, Levi and McFadden proved the following result.

\begin{thm}
Let $a\in \trans\setminus \sym$. Then
\begin{enumerate}
\item  $\langle g^{-1}ag\mid g\in \sym\rangle$ is idempotent generated;
\item $\langle g^{-1}ag\mid g\in \sym\rangle$ is regular.
\end{enumerate}
\end{thm}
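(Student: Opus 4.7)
Let $r = \rank(a)$ and let $\lambda$ denote the kernel type (multiset of block sizes) of $\ker(a)$. Write $A := \{g^{-1}ag:g\in\sym\}$, so the elements of $A$ are the $\sym$-conjugates of $a$, all sharing rank $r$ and kernel type $\lambda$; set $S := \langle A\rangle$. The plan is to locate rank-$r$, kernel-type-$\lambda$ idempotents inside $S$, show that every generator of $S$ factors as a product of such idempotents, and finally exhibit explicit inverses for elements of $S$.

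First I would produce rank-$r$, kernel-type-$\lambda$ idempotents inside $S$. Although $A$ itself contains no idempotent unless $a$ is one, a suitable product $a\cdot (g^{-1}ag)$ can be arranged to be an idempotent of rank $r$ and kernel type $\lambda$ by choosing $g$ so that $g^{-1}ag$ is injective on $\im(a)$ and its image meets the blocks of $\ker(a)$ correctly. Once one such idempotent $e\in S$ is in hand, any $\sym$-conjugate $h^{-1}eh$ again lies in $S$, because substituting each occurrence of $a$ in the expression for $e$ by the corresponding conjugate $h^{-1}ah\in A$ produces an element of $S$ equal to $h^{-1}eh$. As $h$ ranges over $\sym$ one sweeps out the full set $E_\lambda$ of idempotents in $\trans$ of rank $r$ and kernel type $\lambda$.

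I would then show that every $b\in A$ is a product of two (or in corner cases three) elements of $E_\lambda$, which proves (1). Given $b$ with kernel $K = \{B_1,\ldots,B_r\}$ and image $I = \{i_1,\ldots,i_r\}$ where $b(B_j)=\{i_j\}$, choose a transversal $T=\{t_1,\ldots,t_r\}$ of $K$ compatible with $I$: set $t_j=i_j$ whenever $i_j\in B_j$, otherwise pick $t_j\in B_j\setminus I$. Let $e_1\in E_\lambda$ be the idempotent with kernel $K$ and image $T$, and let $e_2\in E_\lambda$ be the idempotent with image $I$ whose fibers realise the pairing $t_j\mapsto i_j$; the fibers can be filled out to match the multiset $\lambda$ because the forced incidences concern only the $r$ pairs $(t_j,i_j)$. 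A direct computation then yields $b=e_1e_2$.

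For (2), given $c\in S$ with image $J$ and kernel $L$, take a transversal $U$ of $L$ so that $c|_U\colon U\to J$ is a bijection, define $c'$ on $J$ to be the inverse of $c|_U$, and extend $c'$ over $[n]\setminus J$ so that $c'$ has rank $r$ and kernel type $\lambda$; this is possible since $|J|\le r$ and $\lambda$ partitions $n$. Then $c'\in A\subseteq S$ and $cc'c=c$, proving $S$ is regular. The main obstacle throughout is the combinatorial bookkeeping in the two steps above: one must verify that the prescribed forced co-fiber pairs are compatible with the fixed size-multiset $\lambda$, and that the initial rank-$r$ idempotent really can be realised as a product of two conjugates of $a$. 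The hypothesis $a\in\trans\setminus\sym$ corresponds exactly to $\lambda\neq(1,\ldots,1)$, so at least one part of $\lambda$ has size $\ge 2$, providing the slack needed to absorb the forced incidences and push the constructions through.
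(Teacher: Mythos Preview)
The paper does not supply its own proof of this statement; the theorem is quoted from Levi and McFadden as background, so there is nothing in the paper to compare against. Evaluating your sketch on its own merits, the overall strategy---manufacture one rank-$r$ idempotent inside $S$, conjugate to obtain all of $E_\lambda$, then factor each generator through $E_\lambda$---is the right shape, but two of the steps do not go through as written.

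In your part (1), the transversal $T$ need not exist: if some kernel block $B_j$ is a singleton $\{x\}$ with $x\in I$ but $i_j\neq x$, then $i_j\notin B_j$ while $B_j\setminus I=\emptyset$, so no legal $t_j$ is available. For instance, with $n=5$, $\lambda=(3,1,1)$ and $b$ given by $\{1,2,3\}\mapsto 4$, $4\mapsto 5$, $5\mapsto 1$, the block $B_2=\{4\}$ lies inside $I=\{1,4,5\}$ yet $i_2=5\notin B_2$. A similar obstruction hits the construction of $e_2$: if several of the forced pairs $t_j\mapsto i_j$ have $t_j\neq i_j$ but $\lambda$ has only one non-singleton part, the fibre sizes cannot be made to match $\lambda$. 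Your parenthetical ``in corner cases three'' signals awareness that something is missing, but the repair is not given.

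More seriously, in part (2) the assertion $c'\in A$ is false. The set $A$ is the $\sym$-conjugacy class of $a$, and two maps in $\trans$ are $\sym$-conjugate precisely when their functional digraphs are isomorphic; rank and kernel type alone do not determine that class. For example, on three points the maps $1\mapsto 1,\ 2\mapsto 1,\ 3\mapsto 2$ and $1\mapsto 2,\ 2\mapsto 1,\ 3\mapsto 1$ both have rank $2$ and kernel type $(2,1)$, yet are not conjugate (one has a fixed point, the other a $2$-cycle on its image). What you actually need is $c'\in S$, and that requires showing that \emph{every} rank-$r$, kernel-type-$\lambda$ transformation lies in $\langle E_\lambda\rangle\subseteq S$---which is the full strength of part (1), not merely the special case $b\in A$. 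So regularity should be deduced \emph{after} idempotent-generation is completely established, rather than by a direct appeal to membership in $A$.
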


Using a beautiful argument, McAlister \cite{mcalister} proved that the semigroups $\langle g^{-1}ag\mid g\in \sym\rangle$ and $\langle a, \sym\rangle\setminus \sym$ (for $a\in \trans\setminus \sym$) have exactly the same set of  idempotents; therefore, as $\langle g^{-1}ag\mid g\in \sym\rangle$ is idempotent generated, it follows that
$$
 \langle g^{-1}ag\mid g\in \sym\rangle=\langle a, \sym\rangle\setminus \sym.
$$

Later, Levi \cite{levi96} proved that $\langle g^{-1}ag\mid g\in \sym\rangle= \langle g^{-1}ag\mid g\in \alt\rangle$ (for $a\in \trans\setminus \sym$), and hence the three results above remain true when we replace $\sym$ by $\alt$.
The following list of problems naturally arises from these considerations.

\begin{enumerate}
\item  Classify the groups $G\leq\sym $ such that for all $a\in \trans\setminus\sym$ we have   that the semigroup $\langle g^{-1}ag\mid g\in G\rangle$ is idempotent generated.
\item Classify the groups $G\leq\sym $ such that for all $a\in \trans\setminus\sym$ we have   that the semigroup $\langle g^{-1}ag\mid g\in G\rangle$ is regular.
\item Classify the groups $G\leq\sym $ such that for all $a\in \trans\setminus\sym$ we have $$\langle a,G\rangle\setminus G = \langle g^{-1}ag\mid g\in G\rangle.$$
\end{enumerate}

The two first questions were solved in \cite{ArMiSc} as follows:

\begin{thm}
If $n\geq 1$ and $G$ is a subgroup of $\sym$, then the following are equivalent:
\begin{enumerate}
\item[(i)] The semigroup
$\genset{g^{{-1}}ag \mid g\in G}$ is idempotent generated
for all $a\in \trans\setminus\sym$.
\item[(ii)] One of the following is valid for $G$ and $n$:
\begin{enumerate}
\item[(a)] $n=5$ and $G$ is $\agl(1,5)$;
\item[(b)] $n=6$ and $G$ is  $\psl(2,5)$ or $\pgl(2,5)$;
\item[(c)] $G$ is $\alt$ or $\sym$.
\end{enumerate}
\end{enumerate}
\end{thm}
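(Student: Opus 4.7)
The plan is to prove the two implications separately. For (ii)$\Rightarrow$(i), the cases $G=\sym$ and $G=\alt$ follow at once from the Levi--McFadden theorem and Levi's result quoted above, since both give $\langle g^{-1}ag\mid g\in G\rangle=\langle a,G\rangle\setminus G$, which was shown to be idempotent generated. The three sporadic cases $\agl(1,5)$, $\psl(2,5)$ and $\pgl(2,5)$ reduce to a finite check: one picks a set of $G$-orbit representatives $a\in\trans\setminus\sym$ (parametrised by the kernel partition and image of $a$ up to $G$-action), computes the subsemigroup of conjugates, and verifies idempotent generation directly. Since $n\in\{5,6\}$, this is conveniently carried out in GAP.

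For (i)$\Rightarrow$(ii), assume $G$ has property (i). The strategy is to derive successively stronger transitivity conditions on $G$ and then invoke a CFSG-based classification. First I would show $G$ must be transitive: if $\Delta\subsetneq[n]$ is a $G$-orbit, then choosing $a$ whose image straddles $\Delta$ and $[n]\setminus\Delta$ constrains every conjugate, and hence every idempotent of $\langle g^{-1}ag\mid g\in G\rangle$, in a way incompatible with rebuilding $a$ from those idempotents. A sharper version of the same idea shows $G$ must be $2$-homogeneous: taking $a$ to be the idempotent identifying two points $i,j$ and fixing the rest, every conjugate has its unique non-trivial kernel class in the $G$-orbit of $\{i,j\}$, and so does every idempotent of the conjugate semigroup; if $G$ has more than one orbit on unordered pairs then some conjugate of $a$ with kernel class in a different orbit cannot be written as a product of such idempotents.

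Having reduced to the $2$-homogeneous case, I would go through the classification of $2$-homogeneous groups family by family. For each group outside the theorem's list I would exhibit an explicit $a\in\trans\setminus\sym$ for which the conjugate semigroup fails to be idempotent generated, typically by choosing $a$ so that a $G$-invariant combinatorial structure (an orbit on set-partitions, or on pairs of the form (image, kernel-type)) constrains the idempotents of rank $\rank(a)$ and leaves some conjugate of $a$ unreachable.

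The main obstacle is this final family-by-family elimination. The $2$-homogeneous groups comprise the subgroups of $\agaml(1,q)$ together with the almost simple families (socle $\psl(d,q)$, $\mathrm{PSU}(3,q)$, $\Sp(2d,2)$, etc.) and a handful of sporadic small-degree examples such as $\M_{11}$ and $\M_{12}$. Producing a uniformly bad element $a$ for each infinite family, while dispatching the remaining small exceptional cases by GAP computation, is the most delicate part of the argument.
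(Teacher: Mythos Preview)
The theorem you are trying to prove is not proved in this paper at all: it is quoted verbatim from \cite{ArMiSc} (Ara\'ujo, Mitchell and Schneider), introduced by the sentence ``The two first questions were solved in \cite{ArMiSc} as follows''. There is therefore no proof in the present paper to compare your proposal against; the present paper merely uses this classification (and its companion, Theorem~\ref{th2}) as background for the normalizing-group problem.

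That said, your outline is broadly in the spirit of \cite{ArMiSc}. A couple of remarks. For (ii)$\Rightarrow$(i) in the $\sym$/$\alt$ cases you do not need the detour through $\langle a,G\rangle\setminus G$: Levi--McFadden proved idempotent generation of $\langle g^{-1}ag\mid g\in\sym\rangle$ directly, and Levi's result $\langle g^{-1}ag\mid g\in\alt\rangle=\langle g^{-1}ag\mid g\in\sym\rangle$ transfers it to $\alt$. For (i)$\Rightarrow$(ii), reducing only to $2$-homogeneity and then going family-by-family through the $2$-homogeneous classification is weaker than what \cite{ArMiSc} actually does: there one extracts a stronger combinatorial condition (a transversal/section property for kernels and images across a range of ranks), which prunes the list far more aggressively before any case analysis. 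Your proposed elimination ``for each infinite family produce a bad $a$'' is where the real work lies, and as you yourself flag, it is not clear your sketch supplies the missing idea for doing this uniformly; that is precisely the content of the cited paper.
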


\begin{thm}\label{th2}
If $n\geq 1$ and $G$ is a subgroup of $\sym$,  then the following are equivalent:
\begin{enumerate}
    \item[\rm (i)] The semigroup $\genset{g^{{-1}}ag \mid g\in G}$ is regular for all $a\in \trans\setminus\sym$.
\item[(ii)] One of the following is valid for $G$ and $n$:
    \begin{enumerate}
    \item[\rm (a)] $n=5$ and $G$ is  $ C_5,\ \dihed{5},$ or $\agl(1,5)$;
    \item[\rm (b)] $n=6$ and $G$ is $ \psl(2,5)$ or $\pgl(2,5)$;
    \item[\rm (c)]  $n=7$ and $G$ is $\agl(1,7)$;
    \item[\rm (d)] $n=8$ and $G$ is $\pgl(2,7)$;
    \item[\rm (e)] $n=9$ and $G$ is  $\psl(2,8)$ or $\pgaml(2,8)$;
    \item[\rm (f)] $G$ is $\alt$ or $\sym$.
\end{enumerate}
\end{enumerate}
\end{thm}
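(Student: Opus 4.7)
The plan is to handle the two directions separately. For (ii) $\Rightarrow$ (i), the cases $G = \sym$ and $G = \alt$ fall out of the Levi--McFadden and McAlister results recalled in the introduction: the semigroup $\genset{g^{-1}ag : g \in \sym}$ is idempotent-generated hence regular, and Levi's theorem identifies it with $\genset{g^{-1}ag : g \in \alt}$. For each sporadic case (a)--(e) I would verify the property by direct computation, enumerating representatives of the $G$-conjugacy classes in $\trans \setminus \sym$ and for each representative $a$ building $\genset{g^{-1}ag : g \in G}$ and checking that every element admits an inverse inside the semigroup; this is a finite and entirely mechanical task, readily implemented in GAP.

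For the harder direction (i) $\Rightarrow$ (ii) my plan is to prove the contrapositive: any $G \leq \sym$ not on the list admits some $a \in \trans \setminus \sym$ with $\genset{g^{-1}ag : g \in G}$ non-regular. I would first reduce to primitive groups. If $G$ has orbits $O_1, \ldots, O_k$ with $k \geq 2$, one constructs a rank-$k$ transformation $a$ whose image meets each orbit in a single point and whose kernel breaks the symmetry between orbits of different sizes; the generated semigroup then contains an element whose kernel--image data cannot be reproduced by any product of conjugates. If $G$ is transitive but imprimitive with a non-trivial block system $\mathcal{B}$, I would choose $a$ to collapse a single block onto a point of another block and argue analogously: the $G$-conjugates all respect $\mathcal{B}$ in a controlled way, and the products admit no suitable inverse inside the semigroup.

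The primitive case is the main body of the argument, and I would handle it via the O'Nan--Scott theorem together with CFSG. Since every sporadic group on the conclusion list has degree at most $9$, the task is to eliminate every primitive $G \neq \sym, \alt$ of degree $n \geq 10$ and every remaining primitive $G$ of small degree not matching (a)--(e). For small $n$ one enumerates the primitive groups from the GAP library and constructs a witness by inspecting the $G$-orbit structure on rank-$k$ transformations for small $k$ (pairs, rank-$(n-1)$ idempotents, and similar low-rank strata usually suffice). For large $n$ one leans on standard polylogarithmic bounds $|G| \le n^{c\log n}$ for primitive groups not containing $\alt$, combined with a counting argument: when $a$ has small rank, the $G$-orbit of $a$ is too small to generate, by composition, all the transformations needed to witness regularity of every element in the semigroup. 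The step I expect to be the main obstacle is precisely this primitive case analysis, and in particular the delicate discrimination between the small-degree $2$-transitive (and, at $n=5$, primitive but not $2$-transitive) groups that do satisfy the property and those of the same degree that do not; this boundary appears to require genuinely case-by-case construction of witnesses rather than a single uniform argument, and the list in (ii) should emerge only after all such cases have been checked.
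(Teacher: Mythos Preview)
The paper does not prove this theorem at all: it is quoted verbatim from \cite{ArMiSc} as background (``The two first questions were solved in \cite{ArMiSc} as follows''), and the present paper's own work concerns only the normalizing-group classification (Theorem~\ref{main}). So there is no proof here to compare your proposal against; what you have written is a sketch of how one might reprove the Ara\'ujo--Mitchell--Schneider result.

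Judged on its own terms, your sketch has a genuine gap in the primitive case. The counting heuristic you propose --- that for primitive $G\not\supseteq\alt$ with $n$ large the $G$-orbit of a low-rank $a$ is ``too small to generate \ldots\ all the transformations needed to witness regularity'' --- is not an argument: regularity of $\langle a^G\rangle$ does not require generating many transformations, only that each element $b$ of the semigroup admit some $c$ in the semigroup with $bcb=b$, and there is no obvious cardinality obstruction to that. The idea that actually drives the classification (and that your plan does not mention) is combinatorial: for a rank-$k$ map $a$, the element $a$ is regular in $\langle a^G\rangle$ if and only if some $g\in G$ sends the image of $a$ to a section of the kernel of $a$; requiring this for \emph{all} rank-$k$ maps is exactly the $k$-universal transversal property for $G$. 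The proof in \cite{ArMiSc} (together with the group-theoretic input later refined in \cite{ArCa12}) proceeds by reducing regularity to this $k$-ut condition for all $k\le\lfloor(n+1)/2\rfloor$ and then classifying the groups with that property. Your intransitive and imprimitive reductions are in the right spirit but also underspecified; the clean route there, too, is via the ut property (an intransitive or imprimitive group visibly fails $2$-ut), not via ad hoc kernel/image bookkeeping.
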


These results leave us with the third problem. Given $a\in \trans\setminus \sym$, we say that a group $G\leq \sym$ is \emph{$a$-normalizing} if $$\langle a,G\rangle \setminus G=\langle g^{{-1}}ag\mid g\in G\rangle.$$ If $G$ is $a$-normalizing for all $a\in \trans\setminus \sym$, then we say that $G$ is \emph{normalizing}. Recall that the \emph{rank} of a transformation $f$ is just the number of points in its image; we denote this by $\rank(f)$. For a given $k$ such that $1\leq k< n$, we say that $G$ is $k$-normalizing if $G$ is $a$-normalizing  for all rank $k$ maps $a\in \trans \setminus \sym$. 

Levi, McAlister and McFadden  \cite[p.464]{lmm} ask for a classification of all pairs $(a,G)$ such that $G$ is $a$-normalizing, and   in \cite{ArMiSc}  is proposed the more tractable problem of classifying the normalizing groups. The aim of this paper is to provide such a classification.

\begin{thm}\label{main}
If $n\geq 1$ and $G$ is a subgroup of $\sym$,  then the following are equivalent:
\begin{enumerate}
    \item[\rm (i)] The group $G$ is normalizing, that is, for all $a\in \trans \setminus \sym$ we have $$\langle a,G\rangle \setminus G = \langle g^{{-1}}ag\mid g\in G\rangle;$$
\item[(ii)] One of the following is valid for $G$ and $n$:
    \begin{enumerate}
    \item $n=5$ and $G$ is $ \agl(1,5)$;
    \item $n=6$ and  $G$ is  $ \psl(2,5)$ or $\pgl(2,5)$;
    \item $n=9$ and  $G$ is  $\psl(2,8)$ or $\pgaml(2,8)$;
    \item  $G$ is $\{1\}$, $\alt$ or $\sym$.
\end{enumerate}
\end{enumerate}
\end{thm}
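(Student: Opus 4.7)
The plan is to prove both directions of the equivalence.

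For the forward direction — each listed $G$ is normalizing — the trivial group is immediate from the definition, and $G \in \{\alt, \sym\}$ follows from the Levi-McAlister-McFadden equality already cited. For the small exceptional candidates $\agl(1,5)$ on $5$ points, $\psl(2,5)$ and $\pgl(2,5)$ on $6$ points, and $\psl(2,8)$ and $\pgaml(2,8)$ on $9$ points, I would verify the normalizing property directly: for each $G$-conjugacy-class representative $a \in \trans \setminus \sym$, one confirms the equality $\langle a, G\rangle \setminus G = \langle g^{-1}ag \mid g\in G\rangle$ by inspection. Since both sets are finite and the ambient groups are small, this is amenable to computer-algebraic verification (GAP).

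For the reverse direction, the task is to show that any $G \leq \sym$ not on the list fails to be $a$-normalizing for some $a$. The key observation is that the natural witnesses come in the form $ga$ and $ag$ for $g \in G$: these have the same rank as $a$, but one shares its image with $a$ while having a translated kernel, and the other shares its kernel while having a translated image. In contrast, any product of conjugates of $a$ of maximal rank $\rank(a)$ must have its kernel equal to the kernel of its first factor (a $G$-translate of $\ker a$) and its image equal to the image of its last factor (a $G$-translate of $\im a$). When $G$ is not sufficiently transitive on these combinatorial configurations, these constraints force a strict containment $T_a := \langle g^{-1}ag \mid g\in G\rangle \subsetneq \langle a, G\rangle \setminus G$, providing the required witness of failure.

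The exclusion of non-listed groups proceeds by a cascade of transitivity reductions. For $G$ non-trivial and intransitive, a rank-$2$ transformation $a$ with image contained in a single $G$-orbit already yields a failure. For $G$ transitive but imprimitive or not $2$-homogeneous, a rank-$(n-1)$ idempotent $a$ combined with a $g \in G$ mapping across blocks (or across orbits on pairs) produces a $ga$ or $ag$ not in $T_a$. Finally, for $G$ $2$-transitive, the CFSG-based classification is invoked; every family other than those appearing in the theorem is ruled out by an explicit witness, and the remaining small candidates are checked individually.

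The main obstacle is handling the $2$-transitive case, especially the asymmetry that $\psl(2,8)$ and $\pgaml(2,8)$ on $9$ points are normalizing despite being absent from the idempotent-generated classification of Theorem 1.2. For these groups there must exist transformations $a$ for which $\langle a, G\rangle \setminus G$ is not itself idempotent generated, yet still coincides with $T_a$; verifying this, and certifying that no further primitive $2$-transitive group shares this property, requires careful structural analysis of the relevant $\mathcal{D}$-class decompositions together with computer-algebraic verification.
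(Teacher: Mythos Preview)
Your forward direction matches the paper's: the trivial group and $\alt,\sym$ are handled as you say, and the five sporadic groups of degree $5,6,9$ are verified computationally (the paper uses \textsf{GAP} with the \textsf{orb} and \textsf{Citrus} packages, checking only one $a$ per conjugation orbit).

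The reverse direction, however, has a genuine gap. Your cascade stops at $2$-homogeneity/$2$-transitivity, and you then propose to run through the CFSG list of $2$-transitive groups, producing an explicit witness $a$ for every family. That is an open-ended programme, not a proof: the $2$-transitive groups comprise several infinite affine and almost-simple families, and nothing in your outline explains how to manufacture a failing $a$ uniformly across, say, $\mathrm{PSL}(2,q)$ on $q+1$ points or $\mathrm{AGL}(d,q)$ on $q^d$ points.

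The paper avoids this entirely by proving a much stronger reduction (its Lemma~2.6): if $G\le\sym$ is normalizing and $n\ge 10$, then $G$ is $(k-1,k)$-homogeneous for \emph{every} $k\le\lfloor(n+1)/2\rfloor$. The argument takes $m=\lfloor(n+1)/2\rfloor$ and, assuming $G$ fails $(m-1,m)$-homogeneity with witness sets $I$ (of size $m-1$) and $J$ (of size $m$), builds the map $a$ whose kernel has the $m-1$ points of $I$ as singleton classes and everything else as one big class, with image $J$. Because no $g\in G$ carries $J$ into a section of $\ker(a)$, every product of two conjugates of $a$ drops rank; so normalizing would force $ag\in a^G$ for all $g\in G$. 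This is then excluded by a short intersection-counting argument (the average of $|Jg\cap I|$ over $g\in G$ is $m(m-1)/n$, so constancy fails for $n\ge10$, and the remaining constant case is killed by comparing fixed-point structure of $ag$ and $a^h$).

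Once $(k-1,k)$-homogeneity is established, the classification of such groups from \cite{ArCa12} (Theorem~2.4 in the paper) yields $(\lfloor(n-1)/2\rfloor)$-homogeneity for $n\ge10$, which leaves nothing proper for $n\in\{10,11\}$, only $M_{12}$ for $n=12$ (dispatched by one explicit $a$), and nothing for $n\ge13$. Degrees $4\le n\le 9$ then reduce to a short finite list handled by machine. Your proposal is missing precisely this $(k-1,k)$-homogeneity lemma; without it the $2$-transitive case is not a ``remaining small list'' but the bulk of the problem.
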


 \section{Main result}

 The goal of this section is to prove Theorem \ref{main} for all groups of degree at least $10$. This proof is carried out in a sequence of lemmas. The groups of degree less than $10$ will be handled  in the  next section. The results of this section hold for all $n$ unless otherwise stated.

 If $G$ is trivial, then $G$  is obviously normalizing, so we always assume that $G$ is non-trivial.

We start by stating an easy lemma whose proof is self-evident, and  that will be used without further mention. A subset $X$ of $[n]$ is said to be a \emph{section} of a partition $\mathcal{P}$ of $[n]$ if $X$ contains precisely one element in every class of $\mathcal{P}$. The \emph{kernel} of $a\in \trans$ is the equivalence relation $\ker(a)=\{(x,y)\in [n]:(x)a=(y)a\}$. 

\begin{lem}\label{tiny}
Let $G$ be a subgroup of $\sym$ and let $a\in\trans\setminus \sym$. Then, if for some $g,h\in G$ we have $\rank(h^{-1}ahg^{-1}ag\ldots)=\rank(a)$, then exists $h_{1}:=hg^{{-1}}\in G$ such that $h_{1}$ maps the image of $a$ to a section of the kernel of $a$.
\end{lem}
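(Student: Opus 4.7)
The plan is to exploit submultiplicativity of rank: since every conjugate $k^{-1}ak$ has the same rank as $a$, and $\rank(fg)\leq \min(\rank(f),\rank(g))$ for any $f,g\in\trans$, the hypothesis $\rank(h^{-1}ah\cdot g^{-1}ag\cdots)=\rank(a)$ forces every initial product to have rank $\rank(a)$ as well. In particular the two-factor product $b c := (h^{-1}ah)(g^{-1}ag)$ already has rank $\rank(a)$, so the ellipsis is irrelevant and the lemma reduces to a statement about just the first two conjugates.

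For two transformations $b,c$ of common rank $k$, a standard observation is that $\rank(bc)=k$ if and only if $c$ is injective on $\im(b)$, equivalently $\im(b)$ is a section of $\ker(c)$ (one inclusion gives $\rank(bc)=|\im(b)|$, the other forces $\im(bc)=\im(c)$, and both sides have cardinality $k$). I would then compute the relevant image and kernel directly from the right-action conventions used in the paper: since $h^{-1}$ is a bijection, $\im(h^{-1}ah)=(\im(a))h$; and the kernel classes of $g^{-1}ag$ are exactly the sets $(B)g$ as $B$ runs through the kernel classes of $a$, because $(x)(g^{-1}ag)=(y)(g^{-1}ag)$ iff $((x)g^{-1},(y)g^{-1})\in\ker(a)$.

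Plugging these in, the condition ``$\im(b)$ is a section of $\ker(c)$'' reads: for each kernel class $B$ of $a$, the set $(\im(a))h$ meets $(B)g$. Applying the bijection $g^{-1}$ to both sides converts this to: $(\im(a))hg^{-1}$ meets every kernel class $B$ of $a$. Setting $h_1:=hg^{-1}\in G$, the set $(\im(a))h_1$ has cardinality $\rank(a)$, equal to the number of kernel classes of $a$, and meets each of them, so it must be a section of $\ker(a)$. That is exactly the desired conclusion.

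There is no substantial obstacle here; the only thing to be careful about is bookkeeping under the right-action convention and the direction of conjugation. The whole proof is essentially a one-line image/kernel computation once one notes that the long product can be truncated to two factors.
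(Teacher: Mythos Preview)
Your argument is correct; the paper itself omits the proof, calling the lemma ``self-evident,'' and what you have written is precisely the natural direct verification (truncating to the first two factors by submultiplicativity of rank, then translating the rank condition into the image/section statement via the identities $\im(h^{-1}ah)=(\im a)h$ and $\ker(g^{-1}ag)=\{Bg:B\in\ker a\}$).
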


The following lemma is probably well-known: it is an easy generalization of
a result of Birch~\emph{et al.} \cite{birch}.

\begin{lem}\label{enormous}
Let $G$ be a transitive permutation group on $X$, where $|X|=n$. Let $A$ and
$B$ be subsets of $X$ with $|A|=a$ and $|B|=b$. Then the average value of
$|Ag\cap B|$, for $g\in G$, is $ab/n$. In particular, if $|Ag\cap B|=c$ for
all $g\in G$, then $c=ab/n$.
\end{lem}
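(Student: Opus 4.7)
The plan is a standard double-counting argument based on the orbit-counting consequence of transitivity. I will count the set
\[
S = \{(g,x,y) : g\in G,\ x\in A,\ y\in B,\ xg=y\}
\]
in two different ways.

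First I would fix $g\in G$ and count the pairs $(x,y)$ with $x\in A$, $y\in B$, $xg=y$. Since $g$ is a bijection, such pairs are in bijection with elements $y\in Ag\cap B$ (via $y=xg$). So for each $g$ the contribution is $|Ag\cap B|$, and therefore $|S|=\sum_{g\in G}|Ag\cap B|$. Dividing by $|G|$ will yield the average of $|Ag\cap B|$ on the left-hand side.

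Next I would fix $(x,y)\in A\times B$ and count $g\in G$ with $xg=y$. Since $G$ is transitive on $X$ with $|X|=n$, for every $x\in X$ the stabilizer $G_x$ has index $n$, so $|G_x|=|G|/n$; and the set of $g$ with $xg=y$ is either empty or a coset of $G_x$ (a single coset, by transitivity, since such $g$ exists). Hence for each of the $ab$ pairs $(x,y)$ the number of $g$'s is exactly $|G|/n$. Thus $|S|=ab\cdot|G|/n$.

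Combining the two counts gives $\sum_{g\in G}|Ag\cap B| = ab|G|/n$, so the average is $ab/n$, which is the main claim. The ``in particular'' statement is immediate: if the function $g\mapsto|Ag\cap B|$ is constant with value $c$, its average is $c$, hence $c=ab/n$. No step here is a real obstacle; the only thing to be careful about is the use of transitivity to guarantee that for every $(x,y)$ there actually exists at least one $g\in G$ with $xg=y$, so that the fiber is a full coset of $G_x$ of size $|G|/n$ rather than empty.
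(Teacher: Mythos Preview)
Your argument is correct and is essentially identical to the paper's own proof: both count the set of triples $(x,y,g)$ with $x\in A$, $y\in B$, $xg=y$, once by fixing $g$ (giving $\sum_{g}|Ag\cap B|$) and once by fixing $(x,y)$ (giving $ab\cdot|G|/n$ via transitivity). The paper is just terser; your version spells out the stabilizer/coset justification for the count $|G|/n$, which is exactly the care the paper's phrase ``and then $|G|/n$ choices for $g$'' is implicitly relying on.
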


\begin{proof}
Count triples $(x,y,g)$ with $x\in A$, $y\in B$, and $xg=y$. There are $a$
choices for $x$ and $b$ choices for $y$, and then $|G|/n$ choices for $g$.
Choosing $g$ first, there are $|Ag\cap B|$ choices for $(x,y)$ for each $g$.
The result follows.
\end{proof}

\begin{lem}
Let $G \le S_n$ be normalizing and non-trivial. Then
\begin{itemize}
 \item[(i)] $G$ is transitive;
  \item[(ii)] $G$ is primitive.
\end{itemize}
\end{lem}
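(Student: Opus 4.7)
The plan is to prove each part by contraposition: assuming that $G$ is intransitive (respectively, imprimitive), I will exhibit a single transformation $a\in\trans\setminus\sym$ and a single element $h\in G$ so that $ha$ or $ah$ lies in $\langle a,G\rangle\setminus G$ but cannot lie in $\langle g^{-1}ag\mid g\in G\rangle$. In each case, $a$ is chosen so that every conjugate $g^{-1}ag$, and hence every product of such conjugates, preserves a structure (a set fixed pointwise, or a partition fixed blockwise) that is visibly violated by $ha$ or $ah$ for suitable $h$.

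For (i), assume $G$ is intransitive and non-trivial. Non-triviality gives an orbit $O$ of $G$ with $|O|\geq 2$, and intransitivity gives a point $x\notin O$. Fix $y\in O$, and let $a$ be the idempotent of rank $n-1$ that sends $x$ to $y$ and fixes every other point. A conjugate $g^{-1}ag$ sends only the point $xg$ to $yg$ and fixes everything else; since $x\notin O$ forces $xg\notin O$, each conjugate fixes $O$ pointwise, and hence so does every product of conjugates. On the other hand, for any $z\in O$ and any $h\in G$ we have $(z)h\in O$, so $(z)h\neq x$ and therefore $(z)(ha)=(z)h$; that is, $ha$ restricted to $O$ equals $h$ restricted to $O$. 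Since $G$ is transitive on $O$ and $|O|\geq 2$, there is $h\in G$ that moves some point of $O$, and then $ha$ has rank at most $n-1$, lies in $\langle a,G\rangle\setminus G$, and moves a point of $O$. Thus $ha$ cannot be a product of conjugates of $a$, contradicting the normalizing hypothesis.

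For (ii), I may assume by (i) that $G$ is transitive. Suppose for contradiction that $G$ preserves a non-trivial partition $\mathcal{P}$ of $[n]$ with blocks of size $d\geq 2$. Fix a block $B$, a point $x_0\in B$, and let $a$ be the transformation that sends every point of $B$ to $x_0$ and fixes every point outside $B$. Each conjugate $g^{-1}ag$ collapses $Bg$ onto $x_0g\in Bg$ and fixes every other point; in particular, each conjugate maps every block of $\mathcal{P}$ into itself, and therefore so does every product of conjugates. By transitivity of $G$ on the blocks of $\mathcal{P}$, there is $h\in G$ with $Bh\neq B$; then $ah$ sends all of $B$ to the single point $x_0 h\in Bh$, which lies outside $B$. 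Hence $ah$ does not map $B$ into itself, cannot be a product of conjugates of $a$, and yet has rank $n-d+1<n$ and lies in $\langle a,G\rangle\setminus G$, contradicting the normalizing property. There is no serious obstacle in either step; the essential point is to identify the correct invariant (pointwise fixing of $O$, setwise fixing of the blocks) of products of conjugates, after which the witness transformation $ha$ or $ah$ is immediate.
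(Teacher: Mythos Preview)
Your proof is correct and follows essentially the same approach as the paper: in each case you exhibit an $a$ whose conjugates (hence products of conjugates) preserve an invariant---pointwise fixing of the non-trivial orbit in (i), setwise preservation of the blocks in (ii)---and then observe that some $ha$ or $ah$ in $\langle a,G\rangle\setminus G$ violates it. The only difference is cosmetic: the paper chooses $a$ of smaller rank (the retraction onto the orbit in (i), the projection onto a transversal of the block system in (ii)) and uses $ag$ rather than $ha$, but the underlying idea is identical.
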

\begin{proof}
Regarding (i),  let $A$ be an orbit of $G$ which is not a single point, and suppose
that $|A| < n$. Let $a$ be an (idempotent) map which acts as the
identity on $A$ and maps the points outside $A$ to points of $A$ in any manner.
Then $a$ fixes $A$ pointwise, and hence so does any $G$-conjugate of $a$, and
so does any product of $G$-conjugates: that is, $\langle a^G\rangle$
fixes $A$ pointwise. On
the other hand, if $g\in G$ acts non-trivially on $A$, then so does $ag$, and
$ag\in\langle a,G\rangle\setminus G$. So these two semigroups are not equal,
and $G$ is not normalizing.

Regarding (ii) suppose that $G$ is imprimitive and let $B$ be a non-trivial $G$-invariant partition of $\{1,\ldots,n\}$.
Choose a set $S$ of representatives for the $B$-classes, and let $a$ be the
map which takes every point to the unique point of $S$ in the same $B$-class.
Then $a$ fixes all $B$-classes (in the sense that it maps any $B$-class into
itself), and hence so does any $G$-conjugate of $a$, and so does any product
of $G$-conjugates. On the other hand, the transitivity of $G$  implies that there exists  $g\in G$ that does not fix all $B$-classes,
so that  neither does the element $ag\in\langle a,G\rangle\setminus G$.
As before, it follows that $G$ is not normalizing.
\end{proof}

Now we are ready to prove the main lemma of this section. But before that we introduce some terminology and results. For natural numbers $i,j\leq n$ with $i\leq j$, a  group $G\leq \sym$ is said to be $(i,j)$-homogeneous if for every $i$-set $I$ contained in $[n]$ and for every $j$-set $J$ contained in $[n]$, there exists $g\in G$ such that $Ig\subseteq J$. This notion is linked to homogeneity since an $(i,i)$-homogeneous group is an $i$-homogeneous (or $i$-set transitive) group in the usual sense.

The goal of next lemma is to prove that a normalizing group is $(k-1,k)$-homogeneous, for all
 $k$ such that $1\leq k\leq \lfloor \frac{n+1}{2}\rfloor$. But before stating our next lemma we state here two results about $(k-1,k)$-homogeneous groups. (We denote the dihedral group of order $2p$ by $D(2*p)$.)

 \begin{thm}(See \cite{ArCa12}) \label{thkk-1}
If $n\geq 1$ and $2\leq k\leq \lfloor \frac{n+1}{2} \rfloor$ is fixed, then the following are equivalent:
\begin{enumerate}
    \item[\rm (i)] $G$ is a $(k-1,k)$-homogeneous subgroup of $\sym$;
    \item[\rm (ii)] $G$ is $(k-1)$-homogeneous or $G$ is one of the following groups
    \begin{enumerate}
    \item[\rm (a)] $n=5$ and $G\cong C_5$ or $D(2*5),$  $k=3$;
    \item[\rm (b)]  $n=7$ and $G\cong\agl(1,7)$, with $k=4$;
    \item[\rm (c)]  $n=9$ and $G\cong\asl(2,3)$ or $\agl(2,3)$, with $k=5$.
\end{enumerate}
\end{enumerate}
\end{thm}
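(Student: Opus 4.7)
I would first dispatch the routine direction (ii) $\Rightarrow$ (i). If $G$ is $(k-1)$-homogeneous, then given any $(k-1)$-set $I$ and $k$-set $J$, I choose any $(k-1)$-subset $I'\subset J$; $(k-1)$-homogeneity yields $g\in G$ with $Ig=I'\subseteq J$. For each of the three sporadic families in (ii), a direct inspection (by hand, exploiting the small degrees, or else by a short GAP computation) confirms $(k-1,k)$-homogeneity while verifying that $(k-1)$-homogeneity fails.

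For the substantive direction (i) $\Rightarrow$ (ii), my plan is to suppose $G$ is $(k-1,k)$-homogeneous but not $(k-1)$-homogeneous and then force $G$ into the exceptional list via orbit counting and the classification of homogeneous groups. Let $r$ and $s$ denote the number of $G$-orbits on $(k-1)$-subsets and on $k$-subsets respectively. Since $G$ is not $(k-1)$-homogeneous we have $r\geq 2$, and the Livingstone--Wagner theorem supplies the monotonicity $r\leq s$ whenever $k\leq\lfloor(n+1)/2\rfloor$. The hypothesis of $(k-1,k)$-homogeneity translates, at the orbit level, into the assertion that every $k$-subset contains a member of every $(k-1)$-orbit $\Delta$, so a double count of incident pairs $(I,J)$ with $I\in\Delta$ and $I\subset J$ (essentially Lemma \ref{enormous} applied to the induced action on $(k-1)$-subsets) yields $|\Delta|(n-k+1)\geq\binom{n}{k}$ for every such orbit, forcing every $(k-1)$-orbit to be large.

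Combined with $r\leq s$ and the transitivity already established for any normalizing group, these inequalities make the orbit structure highly restrictive, and the bipartite incidence between $(k-1)$-orbits and $k$-orbits behaves essentially as a tactical configuration. I would then invoke the Kantor/Livingstone--Wagner classification of $k$-homogeneous and $(k-1)$-homogeneous groups, whose non-symmetric candidates fall into short families (such as $\psl(2,q)$ and $\agl(1,p)$ for appropriate prime powers) together with a handful of sporadic cases. For each such candidate one checks whether $(k-1,k)$-homogeneity holds while $(k-1)$-homogeneity fails; the orbit-size bound above, together with $k\leq\lfloor(n+1)/2\rfloor$, forces $n$ to be small (in the exceptional range one gets $n\leq 9$), collapsing the analysis to the three listed families.

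The main obstacle I expect is precisely this final case analysis: one must disentangle the numerous candidate $k$-homogeneous groups at small degree and verify (by orbit computations, likely in GAP) which triples $(G,n,k)$ actually satisfy $(k-1,k)$-homogeneity without $(k-1)$-homogeneity. The approach succeeds because the combinatorial constraints simultaneously bound $n$ and shrink the candidate list to a finite, manageable set, after which direct verification completes the classification.
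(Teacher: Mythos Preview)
The paper does not contain a proof of this theorem at all: it is quoted verbatim as a result of Ara\'ujo and Cameron, with the citation ``(See \cite{ArCa12})'' in lieu of an argument. Consequently there is nothing in the present paper to compare your attempt against; the statement functions here purely as an imported tool used later to reduce the classification of normalizing groups to small degrees.

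That said, a couple of remarks on your sketch itself. First, the clause ``the transitivity already established for any normalizing group'' does not belong here: Theorem~\ref{thkk-1} is a free-standing classification of $(k-1,k)$-homogeneous subgroups of $\sym$ and has nothing to do with the normalizing hypothesis. Second, your proposed appeal to ``the Kantor/Livingstone--Wagner classification of $k$-homogeneous and $(k-1)$-homogeneous groups'' is not obviously applicable, since the whole point is that $G$ is assumed \emph{not} to be $(k-1)$-homogeneous (and there is no reason it should be $k$-homogeneous either); you would need to explain what list of candidate groups you are actually running through and why a $(k-1,k)$-homogeneous group must appear on it. Your double-counting inequality $|\Delta|(n-k+1)\geq\binom{n}{k}$ is correct and does bound the number of $(k-1)$-orbits by $k$, which is a genuine step in the right direction, but the passage from ``few orbits'' to ``$n\leq 9$'' and then to the explicit list is where the real work of \cite{ArCa12} lies, and your outline does not yet supply it.
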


These groups admit an  analogue of the Livingstone--Wagner \cite{lw} result about homogeneous groups.

\begin{cor}(See \cite{ArCa12}) \label{corkk-1}
Let $n\geq 1$, let $3\leq k\leq \lfloor \frac{n+1}{2}\rfloor$ be fixed, and let $G\leq \sym$ be a $(k-1,k)$-homogeneous group. Then $G$ is a $(k-2,k-1)$-homogeneous group, except when  $n=9$ and $G\cong\asl(2,3)$ or $\agl(2,3)$, with $k=5$.
\end{cor}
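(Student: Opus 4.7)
The plan is to apply Theorem \ref{thkk-1} as a dichotomy: any $(k-1,k)$-homogeneous group $G$ is either $(k-1)$-homogeneous in the classical set-transitive sense, or one of the small exceptional groups listed in parts (a)--(c). These two cases can then be handled separately.

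In the generic case where $G$ is $(k-1)$-homogeneous, the hypothesis $3\leq k\leq \lfloor(n+1)/2\rfloor$ forces $k-1\leq n/2$, so the Livingstone--Wagner theorem \cite{lw} applies and $G$ is $(k-2)$-homogeneous. But $(k-2)$-homogeneity immediately entails $(k-2,k-1)$-homogeneity: given any $(k-2)$-set $I$ and any $(k-1)$-set $J$, choose any $(k-2)$-subset $J'\subseteq J$ and use $(k-2)$-homogeneity to find $g\in G$ with $Ig=J'\subseteq J$. So this case is essentially a one-line application.

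For the exceptional families of Theorem \ref{thkk-1} one argues case by case. When $n=5$, $k=3$ and $G\cong C_5$ or $D(2*5)$, the conclusion only demands $(1,2)$-homogeneity, which is equivalent to transitivity, and both groups are transitive. When $n=7$, $k=4$ and $G\cong\agl(1,7)$, the conclusion demands $(2,3)$-homogeneity, which follows from the $2$-transitivity of $\agl(1,7)$. The delicate case is $n=9$, $k=5$ and $G\cong\asl(2,3)$ or $\agl(2,3)$, where the conclusion genuinely fails and must be listed as an exception: identifying $[9]$ with $\mathbb{F}_3^2$, both groups preserve the affine line structure, so the collinear $3$-set $\{(0,0),(1,0),(2,0)\}$ cannot be mapped by any $g\in G$ into the $4$-set $\{(0,0),(1,0),(0,1),(1,1)\}$, whose every three-element subset is a non-collinear triangle. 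Hence $(3,4)$-homogeneity fails.

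The main obstacle is this last $n=9$ verification: one has to know the orbit structure of $\agl(2,3)$ on $3$- and $4$-subsets well enough to produce a concrete witness to the failure of $(3,4)$-homogeneity, which is what identifies these groups as the unique exceptions. Everything else is a short application of Theorem \ref{thkk-1} combined with Livingstone--Wagner.
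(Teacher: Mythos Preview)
Your argument is correct. The dichotomy from Theorem~\ref{thkk-1} is exactly the right tool: in the generic $(k-1)$-homogeneous case Livingstone--Wagner applies because $k\le\lfloor(n+1)/2\rfloor$ gives $2(k-1)\le n-1<n$, and $(k-2)$-homogeneity trivially implies $(k-2,k-1)$-homogeneity as you observe. The exceptional cases are handled cleanly; in particular your witness for the failure of $(3,4)$-homogeneity at $n=9$ is valid, since no three of the four points $(0,0),(1,0),(0,1),(1,1)$ are collinear in $\mathbb{F}_3^2$ and both $\asl(2,3)$ and $\agl(2,3)$ preserve the collinear/non-collinear distinction on triples.

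As for comparison with the paper: there is nothing to compare, because the paper does not prove this corollary at all---it is quoted from \cite{ArCa12} without argument. Your derivation from Theorem~\ref{thkk-1} plus Livingstone--Wagner is the natural one and is presumably what the cited source does as well.
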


 Now we state and prove the main lemma in this section.

\begin{lem}
Let $G\leq \sym$ be a normalizing group such that $n\geq 10$. Then, for all
 $k$ such that $2\leq  k\leq \lfloor \frac{n+1}{2}\rfloor$, the group $G$ is $(k-1,k)$-homogeneous.
\end{lem}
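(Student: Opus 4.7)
The plan is to argue by contradiction: suppose there exist a $(k-1)$-set $I$ and a $k$-set $J$ in $[n]$ with $Ig\not\subseteq J$ for every $g\in G$, and produce a transformation $a\in\trans\setminus\sym$ whose behaviour violates the normalizing identity. I would construct $a$ of rank $k-1$ with $\operatorname{im}(a)=I=\{i_1,\ldots,i_{k-1}\}$ and kernel partition $\ker(a)=K_1\sqcup\cdots\sqcup K_{k-1}$ with $i_s\in K_s$ and $K_s\cap J\neq\emptyset$ for every $s$; such a partition exists because $k\leq\lfloor(n+1)/2\rfloor$ forces $n\geq 2k-1$, leaving enough room in $[n]\setminus(I\cup J)$ to distribute points freely, and the hypothesis $n\geq 10$ rules out the small-degree configurations treated in the next section. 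By construction, $\ker(a)$ admits at least one section contained entirely in $J$.

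For any $g\in G$, since $a\notin\sym$ we have $ag\in\langle a,G\rangle\setminus G$, so by normalizing one may write $ag=(g_1^{-1}ag_1)\cdots(g_m^{-1}ag_m)$ for some $g_1,\ldots,g_m\in G$. Because $\rank(ag)=\rank(a)=k-1$ and every factor also has rank $k-1$, the product must preserve rank at every step; by the kernel/image bookkeeping underlying Lemma \ref{tiny}, each connecting element $h_i:=g_ig_{i+1}^{-1}\in G$ maps $I=\operatorname{im}(a)$ onto a section of $\ker(a)$. Matching the image of the full product forces $g_m$ to lie in the coset $\mathrm{Stab}_G(I)\cdot g$, while matching the kernel partitions forces $g_1$ to lie in $\mathrm{Stab}_G(\ker(a))$. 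Unwinding the identity $g_1=h_1\cdots h_{m-1}g_m$ then expresses $g$ as a product $\sigma^{-1}h_{m-1}^{-1}\cdots h_1^{-1}g_1$ with $\sigma\in\mathrm{Stab}_G(I)$, $g_1\in\mathrm{Stab}_G(\ker(a))$, and each $h_i$ in the set $H:=\{h\in G\mid Ih\text{ is a section of }\ker(a)\}$.

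Under the contradiction hypothesis, no $g\in G$ maps $I$ into $J$, so no $h\in H$ can land on any of the $J$-sections produced in the first step; in particular $H$ systematically misses a positive-density subfamily of the sections of $\ker(a)$. On the other hand, Lemma \ref{enormous} gives $\sum_{g\in G}|Ig\cap J|=|G|(k-1)k/n$, so for many $g\in G$ the value $|Ig\cap J|$ is close to this mean and any normalizing decomposition of $ag$ must route through $h_i$'s that are forced close to $J$-sections; exploiting the primitivity of $G$ and the slack provided by $n\geq 10$, this obstructs the normalizing identity for some specific $g\in G$, giving the required contradiction. The main obstacle is making this last counting step genuinely tight: the sizes of the kernel classes of $a$ and the test element $g\in G$ must be chosen carefully enough that the $J$-avoidance of $H$ cannot be absorbed by cosets of $\mathrm{Stab}_G(I)$ and $\mathrm{Stab}_G(\ker(a))$, and it is precisely here that the hypotheses $n\geq 10$ and $k\leq\lfloor(n+1)/2\rfloor$ are used.
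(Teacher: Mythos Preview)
Your proposal has a genuine gap at the final step, and the difficulty you flag there is not incidental---it is the whole problem. Everything up to the decomposition
\[
g=\sigma^{-1}h_{m-1}^{-1}\cdots h_1^{-1}g_1,\qquad \sigma\in\mathrm{Stab}_G(I),\ g_1\in\mathrm{Stab}_G(\ker a),\ h_i\in H,
\]
is correct bookkeeping, but it gives only \emph{necessary} conditions on a valid factorisation of $ag$; you never exhibit a specific $g$ for which these conditions fail. Your closing paragraph appeals to Lemma~\ref{enormous} and primitivity in a purely heuristic way (``for many $g$ the value $|Ig\cap J|$ is close to the mean'', ``forced close to $J$-sections'', ``the slack provided by $n\geq 10$''), and none of this yields a contradiction. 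Note that $I$ is itself a section of $\ker(a)$, so $1\in H$ and both stabilisers you name sit inside $H$; there is no obvious obstruction to expressing an arbitrary $g$ in the displayed form with $m$ unbounded.

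The paper avoids this morass entirely by a different construction. First, using Corollary~\ref{corkk-1} it reduces to $m=\lfloor\frac{n+1}{2}\rfloor$. Then it chooses $a$ of rank $m$ (not $m-1$) with image $J$ and kernel whose singleton classes are exactly the points of $I$: thus a section of $\ker(a)$ is any set containing $I$. The failure of $(m-1,m)$-homogeneity now forces $\rank(aga)<\rank(a)$ for \emph{every} $g\in G$, and hence any product of two or more $G$-conjugates of $a$ already drops rank. Consequently, normalizing forces $ag=h^{-1}ah$ for a \emph{single} $h\in G$, and the rest of the argument is a clean case split on whether $|Jg\cap I|$ is constant over $G$ (Lemma~\ref{enormous} pinning the constant, with $n\geq10$ ruling out $c\leq1$). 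The missing idea in your attempt is precisely this single-conjugate reduction, obtained by swapping the roles of $I$ and $J$ in the construction of $a$.
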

\begin{proof}
 Suppose that $G$ fails to have the $(k-1,k)$-homogenous property, for some $k< \lfloor\frac{n+1}{2}\rfloor$.
Then it follows that $G$ fails to be $(m-1,m)$-homogeneous, for $m= \lfloor\frac{n+1}{2}\rfloor$, that is, there exist two sets, $I$ and $J$, such that $Ig\not\subseteq J$, for all $g\in G$. Without loss of generality (since we can replace $G$ by some appropriate $g^{{-1}}Gg\leq \sym$) we can assume that $I=\{1,\ldots,m-1\}$,  $J=\{a_{1},\ldots,a_{m}\}$ and hence there is no $g\in G $ such that
$$
\{1,\ldots ,m-1\}g\subseteq \{a_{1},\ldots, a_{m}\}.
$$

Now pick $a\in \trans$ such that
$$a=\left(\begin{array}{ccccccc}
	\{1\}&\ldots &\{m-1\}    &[n]\setminus\{1,\ldots,m-1\}\\
	  a_{1}   &\ldots&a_{m-1}  &a_{m}
\end{array}\right).$$

Observe that (for all $g\in G$) we have  $\rank(aga)<\rank(a)$, because there is no set in the orbit of $\{a_{1},\ldots,a_{m}\}$ that contains $\{1,\dots ,m-1\}$; therefore there is only one chance for $G$ to normalize $a$:
 \begin{eqnarray}\label{obs}
 (\forall g \in G)(\exists h\in G)\ ag = h^{-1}ah.
 \end{eqnarray}

 On the other hand,
 $$
 |\{a_{1},\ldots,a_{m}\}\cap \{1,\dots , m-1\}|=r,
 $$
 implies that  $\rank(a^{2})=r+1$, and hence  $\rank((h^{-1}ah)^{2})=r+1$ as well.

 Now we have two situations: either there exists a constant $c$ such that for all $g\in G$ we have
 $$
 |\{a_{1},\ldots,a_{m}\}g\cap \{1,\dots , m-1\}|=c,
 $$ or not.

 We start by the second case. We are going to build  a map $ah\in \trans\setminus \sym$ and pick a permutation $h^{-1}g\in G$ such that $(ah)h^{-1}g$ is not normalized by $G$.

By assumption there exists $g\in G$ such that
 $$
 |\{a_{1},\ldots,a_{m}\}g\cap \{1,\ldots ,m-1\}|=c
 $$ and there exists $h\in G$ such that
 $$
 |\{a_{1},\ldots,a_{m}\}h\cap \{1,\ldots ,m-1\}|=d<c.
 $$

 Then, by the observation above, $\rank((ah)^{2})=d+1$ and so the rank of any one of its conjugates is also $d+1$: for all $h_{1 }\in G$ we have $\rank((h^{-1}_{1}(ah)h_{1})^{2})=d+1$.

 On the other hand, $\rank((ah\cdot h^{-1}g)^{2})=c+1(>d+1)$ so that
 $$
(\forall h_{1}\in G) ah\cdot h^{-1}g \neq h^{-1}_{1}(ah)h_{1}
  $$ and hence by (\ref{obs})
 $$
 ah\cdot h^{-1}g \not\in \langle (ah)^{h_{1}}\mid h_{1}\in G\rangle ,
  $$
  a contradiction. It is proved that if the size of the following intersection
  $$
 |\{a_{1},\ldots,a_{m}\}g\cap \{1,\dots , m-1\}|
 $$
  varies with $g\in G$, then it is possible to build a map that is not normalized by $G$.

  Now we turn to the first possibility, namely, exists a constant $c$ such that, for all $g\in G$, we have
  $$
 |\{a_{1},\ldots,a_{m}\}g\cap \{1,\dots , m-1\}|=c.
 $$

First observe that if  $c=1$, then  $m(m-1)=n$, which holds only when $n=6$ (see Lemma \ref{enormous} and recall that $m=\lfloor\frac{n+1}{2}\rfloor$). Since $n\geq 10$ we have  $c\geq 2$.

As $
 |\{a_{1},\ldots,a_{m}\}g\cap \{1,\dots , m-1\}|=c
 $, for all $g\in G$, it follows that (for $g=1$) we have
 $|\{a_{1},\ldots,a_{m}\}\cap \{1,\dots , m-1\}|=c.$
Without loss of generality (in order  to increase the readability of the map $a$ below), we will assume that $a_{i}=i$, for $i=1,\ldots ,c$.

Now, as $G$ is transitive, pick $g\in G$ such that $1g=2$, and suppose there exists $h\in G$ such that $ag=a^{h}$, with

$$a=\left(\begin{array}{ccccccc}
	\{1\}&\ldots &\{c\}&\{c+1\}&\ldots &\{m-1\}  &[n]\setminus\{1,\ldots,m-1\}\\
	  1   &\ldots &c&a_{c+1} &\ldots  &a_{m-1}&a_{m}
\end{array}\right),$$
$$ag=\left(\begin{array}{ccccccc}
	\{1\}&\ldots &\{c\}&\{c+1\}&\ldots &\{m-1\}  &[n]\setminus\{1,\ldots,m-1\}\\
	  1g=2   &\ldots &cg&a_{c+1}g &\ldots  &a_{m-1}g&a_{m}g
\end{array}\right)$$
and
$$a^{h}=\left(\begin{array}{ccccccc}
	\{1\}h&\ldots &\{c\}h&\{c+1\}h&\ldots &\{m-1\}h  &[n]\setminus\{1,\ldots,m-1\}h\\
	  1h   &\ldots &ch&a_{c+1}h &\ldots  &a_{m-1}h&a_{m}h
\end{array}\right).$$

In $ag$, $2$ is not a fixed point and $|2(ag)^{{-1}}|=1$. Therefore $2$ is not a fixed point of $a^{h}$ and $|2(a^{h})^{{-1}}|=1$. As the possible non-fixed points of $a^{h}$ with singleton inverse image (under $a^{h}$) are contained in  $\{a_{c+1}h, \ldots , a_{m-1}h\}$, it follows there must be an element $a_{j}\in \{a_{c+1}, \ldots  ,a_{m-1}\}$ such that $a_{j}h=2$.   But this means that $h$ does not permute $\{1,\ldots,m-1\}$ and hence
$$\{\{1\},\dots,\{m-1\}\}h\neq \{\{1\},\ldots, \{m-1\}\}$$
yielding that the kernel of $a^{h}$ and $ag$ are different, a contradiction.

  It is proved that if $G$ fails to be $(k-1,k)$-homogeneous, for some $k$ such that $1\leq k\leq \lfloor \frac{n+1}{2} \rfloor$, then $G$ is not normalizing. The result follows.
  \end{proof}

We have now everything needed in order to prove Theorem \ref{main} regarding the groups of degree at least $10$. In fact, if  $G$ is normalizing, then $G$ is $(k-1,k)$-homogenous for all $k$ such that $1< k\leq \lfloor \frac{n+1}{2} \rfloor$ and hence the group (of degree at least $10$) is $(k-1)$-homogeneous (by Theorem \ref{thkk-1}). A primitive group (of degree $n$)  is proper if it does not contain the alternating group of degree $n$. Therefore, if $n=10$, then a proper primitive normalizing  group must be  $(k=\lfloor \frac{n-1}{2} \rfloor=4)$-homogenous, but there are no such groups of degree $10$. For $n=11$, a proper primitive normalizing  group must be  $(k=\lfloor \frac{n-1}{2} \rfloor=5)$-homogenous, but there are no such groups of degree $11$. If  $n=12$, then the group must be  $(k=\lfloor\frac{n-1}{2} \rfloor=5)$-homogenous, whose unique example (of degree $12$) is $M_{12}$. However $M_{12}$, as the group of permutations of $\{1,\ldots ,12\}$ generated by the following permutations
\[\begin{array}{lll}
(1\ 2\ 3)(4\ 5\ 6)(7\ 8\ 9),&
 (2\ 4\ 3\ 7)(5\ 6\ 9\ 8),& 
  (2\ 9\ 3\ 5)(4\ 6\ 7\ 8),\ \\
  (1\ 10)(4\ 7)(5\ 6)(8\ 9),&
   (4\ 8)(5\ 9)(6\ 7)(10\ 11),&
    (4\ 7)(5\ 8)(6\ 9)(11\ 12),
    \end{array}
\]
 fails to normalize the following map:

 $$a=\left(\begin{array}{cccccc}
	\{1\}&\{2\}&\{3\}&\{4\}&\{5,6\}&\{7,\ldots,12\}\\
	  1   &2&3&4&5&6
\end{array}\right).$$

 In fact, it is easily checked (using GAP \cite{GAP}) that no element of $M_{12}$ maps $\{1,\ldots,6\}$ to a
section for the kernel of this map $a$. So, by Lemma \ref{tiny}, we only have to
check whether, for every $g\in M_{12}$, there exists $h\in M_{12}$ such that
$ag=h^{-1}ah$. This fails for $g=(132)(465)(798)$.

 For $n>12$, the group must be $(k=\lfloor\frac{n-1}{2} \rfloor\geq 6)$-homogenous, but for $k\geq 6$ there are no proper primitive $k$-homogeneous groups   \cite[Theorem 9.4B, p. 289]{dixon}.

Therefore the unique groups that can be normalizing are the trivial group, the symmetric and alternating groups, and some primitive groups of degree at most $9$. In the next section we explain how we used {\sf GAP} \cite{GAP}, {\sf orb} \cite{orb} and {\sf Citrus} \cite{citrus}, to check these groups of small degree. That the symmetric and the alternating groups are normalizing is already well known.

\begin{thm}(\cite[Theorem 5.2]{lmm})\label{sym}
The groups $\sym$ and $\alt$ are normalizing.
\end{thm}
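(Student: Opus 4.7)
The plan is to derive both claims directly from the three foundational results recalled in the introduction: Levi and McFadden's theorem that $\langle g^{-1}ag \mid g \in \sym \rangle$ is idempotent generated, McAlister's theorem that $\langle g^{-1}ag \mid g \in \sym \rangle$ and $\langle a, \sym \rangle \setminus \sym$ share the same idempotents, and Levi's identity $\langle g^{-1}ag \mid g \in \sym \rangle = \langle g^{-1}ag \mid g \in \alt \rangle$. Throughout I fix $a \in \trans \setminus \sym$ and exploit the elementary observation that any word in the generators $\{a\} \cup G$ containing at least one occurrence of $a$ has rank strictly less than $n$, while any word of rank $n$ in those generators lies entirely in $G$.

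For $G = \sym$, the case is essentially finished in the introduction. McAlister's idempotent identification lets one replace each generating idempotent of the conjugate semigroup by an element of $\langle a, \sym \rangle \setminus \sym$, giving $\langle g^{-1}ag \mid g \in \sym \rangle \subseteq \langle a, \sym \rangle \setminus \sym$. The reverse inclusion follows because $\langle a, \sym \rangle \setminus \sym$ is itself idempotent generated---an observation that should be made explicit in the write-up---and so is generated by its own idempotents, all of which lie in the conjugate semigroup by McAlister's theorem.

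For $G = \alt$, I would bootstrap via Levi's identity. Combining it with the $\sym$ case yields
$$\langle g^{-1}ag \mid g \in \alt \rangle \;=\; \langle g^{-1}ag \mid g \in \sym \rangle \;=\; \langle a, \sym \rangle \setminus \sym,$$
so it suffices to prove $\langle a, \sym \rangle \setminus \sym = \langle a, \alt \rangle \setminus \alt$. Both directions are transparent. For the inclusion $\supseteq$, every element of $\langle a, \alt \rangle$ outside $\alt$ must contain an occurrence of $a$, hence has rank $<n$, so cannot lie in $\sym$, and it trivially lies in $\langle a, \sym \rangle$. For $\subseteq$, every generator $g^{-1}ag$ with $g \in \alt$ is a rank-$<n$ word in $\{a\}\cup\alt$, hence lies in $\langle a, \alt \rangle \setminus \alt$, and by the display above these generators already span all of $\langle a, \sym \rangle \setminus \sym$. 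The only delicate step in the whole argument---and therefore the one I would handle first---is justifying that $\langle a, \sym \rangle \setminus \sym$ is idempotent generated; everything else is bookkeeping with rank and two applications of the cited prior work.
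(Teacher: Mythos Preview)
The paper does not give its own proof of this theorem: it is simply quoted from \cite[Theorem~5.2]{lmm}. The discussion in the introduction does sketch the $\sym$ case in almost exactly the way you propose (Levi--McFadden plus McAlister), so in that sense your plan matches the paper.

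That said, the step you flag as ``delicate'' is more than delicate: it is the whole content of the theorem for $\sym$, and it does \emph{not} follow from the three ingredients you list. From $E(A)=E(B)$ (McAlister) and $A=\langle E(A)\rangle$ (Levi--McFadden), where $A=\langle g^{-1}ag\mid g\in\sym\rangle$ and $B=\langle a,\sym\rangle\setminus\sym$, you only get the trivial inclusion $A\subseteq B$ again. To obtain $B\subseteq A$ you need, as you say, that $B$ is idempotent generated --- but nothing among Levi--McFadden, McAlister, or Levi's $\alt$ identity gives you this. Concretely, the computation $b=g_0ag_1\cdots ag_k=\bigl(\prod_i h_i^{-1}ah_i\bigr)\cdot p$ with $p\in\sym$ shows $B=A\cdot\sym$, so the question is exactly whether $A$ is closed under one-sided multiplication by $\sym$, and that is not a formal consequence of regularity or idempotent-generation of $A$. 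You would need either to invoke the actual argument of \cite{lmm}, or to prove independently (via a Howie-type analysis of $\sym$-bi-invariant subsemigroups of $\trans\setminus\sym$) that every such semigroup is idempotent generated. Either way you are no longer deriving the result from the three introductory facts; you are re-proving \cite[Theorem~5.2]{lmm}. The paper's introduction glosses over this same gap with the word ``therefore''.

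Your bootstrap for $\alt$ is clean and correct once the $\sym$ case is in hand: the identities $\langle a^{\alt}\rangle=\langle a^{\sym}\rangle=\langle a,\sym\rangle\setminus\sym$ together with the elementary rank argument do give $\langle a,\sym\rangle\setminus\sym=\langle a,\alt\rangle\setminus\alt$.
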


\section{Computational considerations}

In this section we describe the computational methods used to find the normalizing groups of degree at most $9$.
Regarding primitive groups of degree at most $3$ they contain the alternating group and the result follows by Theorem \ref{sym}. Therefore, from now on we assume that $4\leq n\leq 9$.
We know that a normalizing group $G\leq \mathcal{S}_{n}$ is primitive and
$(k-1,k)$-homogeneous for all $k\leq\lfloor\frac{n+1}{2}\rfloor$. By
Theorem \ref{thkk-1} we have two situations:

\begin{enumerate}
\item  $G$ is $(\lfloor \frac{n-1}{2}\rfloor)$-homogeneous and hence (by inspection of the GAP library of primitive groups) is one of the groups below:

$\ $

\begin{center}
\begin{tabular}{|c|l|}
\hline
Degree&$G$\\
\hline \hline
5&$\agl(1,5)$\\
\hline
6&$\psl(2,5)$, $\pgl(2,5)$\\
\hline
8&$\agl(1,8)$, $\agaml(1,8)$, $\asl(3,2)$, $\psl(2,7)$, $\pgl(2,7)$\\
\hline
9&$\psl(2,8)$, $\pgaml(2,8)$ \\
\hline
\end{tabular}
\end{center}

$\ $

\item or $G$ is one of the groups in Theorem \ref{thkk-1} ($C_{5}$ and $D(2*5)$ of degree $5$; $\agl(1,7)$ of degree $7$; $\asl(2,3)$ and $\agl(2,3)$ of degree 9).
\end{enumerate}

%
%
%
%
%
%
%

To check that a group $G \le \sym$ is
$a$-normalizing for some $a \in \trans\setminus\sym$ it is enough to check
that $aG \subseteq \langle g^{-1}ag \mid g\in G\rangle$, since
the latter is closed under conjugation with elements from $G$.
So we only have to enumerate the $G$-orbit of $a$ with right multiplication
as action and check  membership in the semigroup
$\langle g^{-1}ag \mid g \in G\rangle$ for all its elements. This is essentially achieved
by the following \textsf{GAP}-commands using the packages \textsf{orb}
(see \cite{orb}) and \textsf{Citrus} (see \cite{citrus}):

\begin{verbatim}
gap> o := Orb(G,a,OnRight);; Enumerate(o);;
gap> o2 := Orb(G,a,OnPoints);; Enumerate(o2);;
gap> s := Semigroup(o2);;
gap> ForAll(o,x->x in s);
true
\end{verbatim}

However, for the larger examples on $9$ points checking this for all $a \in
\trans \setminus \sym$ would have taken too long. Fortunately, this was not
necessary, since if $G$ is $a$-normalizing, then it is of course
$a^g$-normalizing for all $g \in G$. So we only have to check this
property for representatives of the $G$-orbits on $\trans\setminus\sym$
under the conjugation action.

To compute a set of representatives
we first implemented an explicit bijection of $\trans$ to the set
$\{ i \in \mathbb{N} \mid 1 \le i \le n^n \}$. Then we organised
a bitmap of length $n^n$ and enumerated all conjugation $G$-orbits
in $\trans$, crossing off the transformations we had already encountered
in the bitmap. Having the representatives as actual transformations
then allowed us to perform the test explained above.

A slight speedup was achieved by actually verifying a stronger condition,
namely that $aG$ is a subset of the $\mathcal{R}$-class of $a$ in the semigroup
$\langle a^g \mid g \in G\rangle$, which turned out to be the case whenever
$G$ was normalizing. Testing membership in the $\mathcal{R}$-class of $a$ in the transformation semigroup
$S:=\langle a^g \mid g \in G\rangle$ can be done
by computing the strong orbit of the image of $a$ under the action of $S$
and the permutation group  induced by the elements of $S$ that stabilise
the image of $a$ setwise; as described in \cite{Linton1998aa}. This method
is implemented in the {\sf Citrus} package \cite{citrus} for {\sf GAP}.

For degree $5$, only $\agl(1,5)$ is normalizing, since the group $C_{5}$
fails to normalize the map

\[ a=\left(\begin{array}{ccc}
	\{1,2,5\}&\{3\}&\{4\}\\
	  1   &3&4
\end{array}\right), \]
\noindent and the group $D(2*5)$ fails to normalize the map
\[ a=\left(\begin{array}{ccc}
	\{1,2,3\}&\{4\}&\{5\}\\
	  1   &3&2
\end{array}\right). \]

For degree $6$, both groups $\psl(2,5)$ and $\pgl(2,5)$ are normalizing.

For degree $7$, we only had to check $\agl(1,7)$, which fails to normalize
the map
\[ a=\left(\begin{array}{ccc}
	\{1,\ldots,5\}&\{6\}&\{7\}\\
	  1   &2&3
\end{array}\right). \]

For degree $8$, all three groups $\agl(1,8)$, $\agaml(1,8)$ and
$\asl(3,2)$ fail to
normalize the map
\[ a=\left(\begin{array}{cccc}
	\{1,\ldots,5\}&\{6\}&\{7\}&\{8\}\\
	  1   &2&3&4
\end{array}\right), \]
the group $\psl(2,7)$ fails to normalize the map
\[ a=\left(\begin{array}{cccc}
	\{1,\ldots,5\}&\{6\}&\{7\}&\{8\}\\
	  1   &2&3&5
\end{array}\right), \]
and finally the group $\pgl(2,7)$ fails to normalize the map
\[ a=\left(\begin{array}{cccc}
	\{1,\ldots,5\}&\{6\}&\{7\}&\{8\}\\
	  1   &2&4&7
\end{array}\right). \]

For degree $9$, the two groups $\psl(2,8)$ and $\pgaml(2,8)$ are
normalizing, whereas both groups $\asl(2,3)$ and $\asl(2,3)$ fail to
normalize the map
\[ a=\left(\begin{array}{cccccc}
	\{1,8\}&\{2,3,7\}&\{4\}&\{5\}&\{6\}&\{9\}\\
	  7   &8&6&9&4&5
\end{array}\right). \]

These computational results complete the proof of our main Theorem
\ref{main}.

\section{Problems}

Regarding this paper, the main problem that has to be tackled now should be the classification of the $k$-normalizing groups.

\begin{problem}
Let $k$ be a fixed number such that $1<k<\lfloor \frac{n+1}{2}\rfloor$. Classify the $k$-normalizing groups, that is, classify the groups that satisfy $\langle a,G\rangle\setminus G=\langle a^{g}\mid g\in G\rangle$,   for every rank $k$ map.
\end{problem}	

To solve this problem is necessary to use the results of \cite{ArCa12}, but that will be just a starting point since many delicate considerations will certainly be required.

The theorems and problems in this paper admit linear versions that are interesting for experts in groups and semigroups, but also to experts in linear algebra and matrix theory. For the linear case, we already know that any singular matrix with any group containing the  special linear group is normalizing \cite{ArSi1,ArSi2} (see also the related papers \cite{Gr,Pa,Ra}).

\begin{problem}
Classify the linear groups $G\leq GL(n,q)$ that, together with any singular linear transformation $a$, satisfy
$$
\langle a,G\rangle \setminus G = \langle h^{-1}a{h}\mid h\in G\rangle.
$$
\end{problem}

A necessary step to solve the previous problem is to solve the following.
\begin{problem}\label{11}
Classify the groups $G\leq GL(n,q)$  such that for all rank $k$ (for a given $k$) singular matrix $a$ we have that $\rank(aga)=\rank(a)$, for some $g\in G$.
\end{problem}

To handle this problem it is useful to keep in mind the following results. Kantor~\cite{kantor:inc} proved that if a subgroup of $\pgaml(d,q)$ acts transitively on $k$-dimensional subspaces, then it acts transitively on $l$-dimensional subspaces for all $l\le k$ such that $k+l\le n$; in~\cite{kantor:line}, he showed that subgroups transitive on $2$-dimensional subspaces are $2$-transitive on the $1$-dimensional subspaces with the single exception of a subgroup of $\pgl(5,2)$ of order $31\cdot5$; and, with the second author~\cite{cameron-kantor}, he showed that such groups must contain $\psl(d,q)$ with the single exception of the alternating group $A_7$ inside $\pgl(4,2)\cong A_8$. Also Hering \cite{He74,He85} and Liebeck \cite{Li86} classified the subgroups of $\pgl(d,p)$ which are transitive on $1$-spaces. (See also \cite{kantor:inc,kantor:line}.)

\begin{problem}
Solve analogues of the results (and problems) in this paper  for independence algebras (for definitions and fundamental results see \cite{ArEdGi,arfo,cameronSz,gould}).
\end{problem}

\section*{Acknowledgements}
The authors would like to express their gratitude to the referee for a very careful review and for suggestions that prompted a much simplified paper.
 
The first author was partially supported  by FCT through the following projects: PEst-OE/MAT/UI1043/2011, Strategic Project of Centro de \'Algebra da Universidade de Lisboa; and PTDC/MAT/101993/2008, Project Computations in groups and semigroups .

The second author is grateful to the Center of Algebra of the University of Lisbon
for supporting a visit to the Centre in which some of this research was done.

\end{document}